\newcommand{\G}[1]{\mathfrak{#1}}
\newcommand{\C}[1]{\mathcal{#1}}
\newcommand{\B}[1]{\mathbb{#1}}
\newcommand{\adlie}{\text{ad}}
\newcommand{\adgr}{\text{Ad}}
\renewcommand{\leq}{\leqslant}
\renewcommand{\geq}{\geqslant}
\numberwithin{equation}{section}
\newtheorem{theorem}{Theorem}[section]
\newtheorem{proposition}[theorem]{Proposition}
\newtheorem{corollary}[theorem]{Corollary}
\title{Birational Equivalences and Kac-Moody Algebras}
\author{Atabey Kaygun}
\address{Istanbul Technical University, Istanbul, Turkey.}
\email{kaygun@itu.edu.tr}
\begin{document}
\maketitle

\begin{abstract}
  We show that every Kac-Moody algebra is birationally equivalent to a smash biproduct of
  two copies of a Weyl algebra together with a polynomial algebra.  We also show that the
  same is true for quantized Kac-Moody algebras where one replaces Weyl algebras with their
  quantum analogues.
\end{abstract}


\section*{Introduction}

We show that every Kac-Moody algebra is birationally equivalent to a smash biproduct of two
copies of an algebra $\B{A}_{n-\ell,n}\otimes k[t_1,\ldots,t_\ell]$ defined as a product of
a Weyl algebra $\B{A}_{n-\ell,n}$ and a polynomial algebra $k[t_1,\ldots,t_\ell]$ where $n$
and $\ell$ are respectively determined by the size and the rank of the underlying
generalized Cartan matrix.  We also show that the same is true for the Drinfeld-Jimbo
quantization of a Kac-Moody algebra where one has to replace the Weyl algebra with its
quantum analogue.  The result we prove in this article is but a small step towards
reformulating the Gel'fand-Kirillov (GK) conjecture for Kac-Moody algebras and their
quantizations.

The GK-conjecture states that the universal enveloping algebra $U(\G{g})$ of a finite
dimensional Lie algebra $\G{g}$ is birationally equivalent to a Weyl algebra $\B{A}_n$ for
some $n$~\cite{GelfandKirillov:Conjecture-I, GelfandKirillov:Conjecture-II}.  The conjecture
is known to be false in general~\cite{AOV:CounterExamplesToGK}, but it is true for
$\G{gl}_n$, $\G{sl}_n$ and nilpotent Lie algebras~\cite{GelfandKirillov:Conjecture-I,
  GelfandKirillov:Conjecture-II}, for solvable Lie algebras~\cite{McConnell:GKConjecture,
  Joseph:GKConjecture}, and for every Lie algebra up to dimension
8~\cite{AOV:GKUptoDimension8}.  The corresponding conjecture for quantum deformations of Lie
algebras and Lie groups is known to be true in many cases, notably for $U_q(\G{sl}_2)$,
$\tilde{U}_q(\G{sl}_n)$, $\tilde{U}_q^{\geq 0}(\G{g})$, $\C{O}_q(M_n)$, and $\C{O}_q(G)$
when $q$ is not a root of unity, or is transcendental over
$\B{Q}$~\cite{AlevDumas:QuantumGK, FutornyHartwig:QuantumGK, MosinPanov:QuantumGK-I,
  Millet:QuantumGK, IoharaMalikov:QuantumGK, Caldero:quantumGelfandKirillov}, and all
$U_q^+(\G{g})$, and therefore, $U_q(\G{g})$ for solvable Lie algebras when $q$ is not a root
of unity~\cite{Panov:quantumGelfandKirillov}. A version of the conjecture has also been
studied by Colliot-Thélène, Kunyavski\u{ı}, Popov and Reichstein in~\cite{CKPR11} where the
authors investigated if algebra of functions on a Lie group $G$ or a Lie algebra $\G{g}$ is
purely transcendental over its invariant subalgebra.

The birational equivalence we prove in this article transforms the GK-conjecture for
Kac-Moody algebras and their quantizations to a similar question on smash biproducts of
(quantized) Weyl and polynomial algebras.  Inspired by the GK conjecture,
in~\cite{kaygun:GWAs} we conjectured that the universal enveloping algebra of a Lie algebra
$\G{g}$ is birationally equivalent to a smash product of a smooth algebra and a torus whose
rank is determined by the rank of the Cartan subalgebra of $\G{g}$. In this paper, we verify
that our conjecture is true for Borel algebras: any Borel subalgebra of a (quantized)
Kac-Moody algebra is birationally equivalent to the smash product of a polynomial algebra
and a torus.

\subsection*{Plan of the article}

Here is a plan of this article. In Section~\ref{sect:generalizedWeyl}, we define generalized
Weyl algebras bound by a generalized Cartan matrix.  We then show that when such an algebra
is of full rank (See Subsection~\ref{subsect:CartanDatum}) it is birationally equivalent to
the product of an ordinary Weyl algebra and a polynomial algebra. In
Theorem~\ref{thm:birational-UEA} we prove that the Borel subalgebras of Kac-Moody algebras
are birationally equivalent to a generalized Weyl algebra which in turn allows us to show
our main result in Corollary~\ref{cor:KacMoody}. Then we prove the analogous results for
Drinfeld-Jimbo quantizations of Kac-Moody algebras in Section~\ref{sect:quantizedKacMoody}.

\subsection*{Notation, conventions and some background}

\subsubsection*{Ground field}

Throughout the paper we fix a ground field $k$ of characteristic 0. All unadorned tensor
products are over $k$.

\subsubsection*{Ambient algebras}

All algebras are assumed to be unital and associative, but not necessarily commutative or
finite dimensional.  We also assume that our algebras are domains, i.e. devoid of any zero
divisors. There are several classes of algebras that we are going to use frequently in this
paper. These are as follows:
\begin{enumerate}

\item $k\{t_1,\ldots,t_n\}$ the noncommutative polynomial algebra on $n$ generators,

\item $k[t_1,\ldots,t_n]$ the commutative polynomial algebra on $n$ generators,

\item $\B{T}^n$ the Laurent polynomial algebra $k[t_1^{\pm 1},\ldots,t_n^{\pm 1}]$, and

\item $\B{A}_{m,n}$ the Weyl algebra given by the presentation
  \[ \frac{k\{x_1,\ldots,x_m,y_1,\ldots,y_n\}}{\langle[x_i,x_{i'}],\ [y_j,y_{j'}],\ [x_i,y_j] -
      \delta_{ij}\mid 1\leq i,i'\leq m,\ 1\leq j,j'\leq n\rangle}
  \]
  where $\delta_{uv}$ is the Kronecker delta.
\end{enumerate}

The classical Weyl algebra $\B{A}_n$ is $\B{A}_{n,n}$, and one has
\[ \B{A}_{m,n} \cong \B{A}_{\min(m,n)}\otimes
  k[t_1,\ldots,t_{|m-n|}] \] for every $m,n\in\B{N}$.

\subsubsection*{Ore sets and localizations}

A multiplicative submonoid $S$ of an algebra $A$ is called a \emph{right Ore set} if for
every $s\in S$ and $u\in A$ there are $s'\in S$ and $u'\in A$ such that $su=u's'$ . If $S$
is a right Ore set then one can invert the elements in $S$ to get an algebra $A_S$ and a
morphism of algebras $\iota_S\colon A\to A_S$ such that $\varphi(S)\subseteq A_S^\times$.

\subsubsection*{Birational equivalences}

In classical algebraic geometry, two irreducible algebraic variety $X$ and $Y$ are called
\emph{birationally equivalent} if there is a rational algebraic function $f\colon X\to Y$
that induces a bijection from a Zariski open subset $U$ of $X$ to a Zariski open subset $V$
of $Y$.  An affine (non-commutative) algebraic variety is called \emph{irreducible} if its
coordinate algebra has no idempotents other than 0 and 1.  With this notion at hand, we call
a morphism of (irreducible) affine non-commutative algebraic varieties
$\varphi\colon A\to A'$ as a \emph{birational equivalence} if there are suitable
localizations $A_S$ and $A'_{S'}$ such that that $\varphi(S)\subseteq S'$ and the extension
$\varphi_S\colon A_S\to A'_{S'}$ is an isomorphism of unital associative algebras.

\subsubsection*{Generalized Cartan matrices}

Throughout the article, we work with a fixed generalized symmetrizable Cartan matrix
$C = (a_{ij})$ of co-rank $\ell$, i.e. the null space of the matrix has dimension
$0\leq \ell\leq n$. 

\subsection*{Acknowledgements}

Most of this article was written while the author was on academic leave at Queen’s
University from Istanbul Technical University. The author would like to thank both
universities for their support.

\section{Generalized Weyl  Algebras}\label{sect:generalizedWeyl}

\subsection{Distributive laws}

Given two algebras $A$ and $B$, and a linear map $\rho\colon B\otimes A\to A\otimes B$ is
called a \emph{distributive law} if the vector space $A\otimes B$ is a unital associative
algebra with the multiplication $(\mu_A\otimes\mu_B)\circ (id_A\otimes\rho\otimes id_B)$ where
$\mu_A$ and $\mu_B$ are multiplication maps of the underlying algebras.  The product algebra is
called the \emph{smash biproduct} of $A$ and $B$, and is denoted by $A\#_\rho B$.  If the
distributive law is clear in the context, we may drop it from the notation.

\subsection{The ambient smash product}

Let us fix a commutative algebra $A$.  For a fixed $n\geq 1$, assume we have a collection of
pairwise commuting automorphisms $\sigma_i\in Aut(A)$ for every $1\leq i\leq n$.
Equivalently, we have a smash product algebra $A\# \B{T}^n$ given by the relations
$t_i a = \sigma_i(a)t_i$ for $a\in A$ and $1\leq i\leq n$.

\subsection{Generalized Weyl algebras}

Let us fix a sequence $\mathbf{b} = (b_1,\ldots,b_n)$ of elements from $A$.  Then we define
the \emph{generalized Weyl algebra of rank-$n$} $\B{W}_\sigma(A,\mathbf{b})$ as the
subalgebra of $A\# \B{T}^n$ generated by $A$ and elements of the form
\begin{equation}\label{eq:generators}
  b_i t_i^{-1}\quad\text{ and }\quad  -t_i
\end{equation}
for $1\leq i\leq n$.  See~\cite{Bavula:GWAI, Bavula:GlobalDim, Hodges:NCDeformations,
  Rosenber:NCAlgebraicGeometry}. See also~\cite{kaygun:GWAs} and references therein.  Now,
we extend \cite[Thm.2.1]{kaygun:GWAs} as follows:
\begin{proposition}\label{prop:birationalWeyl}
  $\B{W}_\sigma(A,\mathbf{b})$ is birationally equivalent to the smash product algebra
  $A\# \B{T}^n$.
\end{proposition}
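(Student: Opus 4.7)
The plan is to exhibit a common Ore set $S$ for the natural inclusion $\iota\colon \B{W}_\sigma(A,\mathbf{b}) \hookrightarrow A\#\B{T}^n$ and verify that the localized map $\iota_S$ is an isomorphism. The only obstruction to $\iota$ being an isomorphism outright is that the generator $t_i^{-1}$ of $A\#\B{T}^n$ lies in the subalgebra generated by $A$, $-t_i$, and $b_i t_i^{-1}$ only after inverting $b_i$, since formally $t_i^{-1} = b_i^{-1}(b_i t_i^{-1})$. So the candidate $S$ should be a multiplicative monoid containing each $b_i$, with enough $\sigma$-symmetry to make it compatible with the right Ore condition.

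Concretely, I would take $S\subset A$ to be the multiplicative monoid generated by the full $\B{Z}^n$-orbit $\{\sigma^\alpha(b_j) \mid \alpha\in\B{Z}^n,\ 1\leq j\leq n\}$, using that the $\sigma_i$ pairwise commute. By construction $S$ is stable under every $\sigma_i^{\pm 1}$. Since $A$ is a commutative domain, the required right Ore condition in both $A\#\B{T}^n$ and $\B{W}_\sigma(A,\mathbf{b})$ reduces, by induction on word length in the generators, to commuting $s\in S$ past $t_i^{\pm 1}$ and $b_i t_i^{-1}$; and the identities $s\cdot t_i = t_i\cdot\sigma_i^{-1}(s)$ and $s\cdot(b_i t_i^{-1}) = (b_i t_i^{-1})\cdot\sigma_i(s)$ place the right denominator back into $S$.

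Given this, the localization $\iota_S\colon \B{W}_\sigma(A,\mathbf{b})_S \to (A\#\B{T}^n)_S$ is injective because it is obtained by localizing an inclusion of domains at a common Ore set. It is also surjective: once $b_i$ is invertible, the identity $t_i^{-1} = b_i^{-1}\cdot(b_i t_i^{-1})$ shows that $t_i^{-1}\in\B{W}_\sigma(A,\mathbf{b})_S$, so the image contains $A$, all $t_i^{\pm 1}$, and $S^{-1}$, which together generate $(A\#\B{T}^n)_S$. Hence $\iota_S$ is an isomorphism, witnessing the claimed birational equivalence.

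The only real obstacle is the bookkeeping around $S$ in the second step: one must make sure $S$ is large enough to be closed under the $\sigma_i$-action (otherwise the Ore condition fails as $s$ is pushed past $t_i^{\pm 1}$), yet still consists of elements of the commutative subalgebra $A$ so that the identity $t_i^{-1} = b_i^{-1}(b_i t_i^{-1})$ directly recovers $t_i^{-1}$ inside the localization of the generalized Weyl algebra. Beyond this, the argument is essentially a direct generalization of \cite[Thm.~2.1]{kaygun:GWAs} to arbitrary rank $n$.
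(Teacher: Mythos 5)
Your proposal is correct and follows essentially the same route as the paper: the paper's (one-sentence) proof also localizes $\B{W}_\sigma(A,\mathbf{b})$ at the Ore set generated by the $\sigma$-orbit of the elements $b_j$ and observes that the localization recovers all of $A\#\B{T}^n$. You simply supply the details the paper omits (closure of $S$ under the $\sigma_i^{\pm1}$, verification of the Ore condition, and the identity $t_i^{-1}=b_i^{-1}(b_it_i^{-1})$), and your choice of the full $\B{Z}^n$-orbit $\{\sigma^\alpha(b_j)\}$ is in fact slightly more careful than the paper's generating set.
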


\begin{proof}
  The subalgebra $\B{W}_\sigma(A,\mathbf{b})$ becomes the whole algebra $A\# \B{T}^n$ when
  we localize $\B{W}_\sigma(A,\mathbf{b})$ with respect to the Ore set generated by elements
  $\sigma_i^m(b_j^n)$ with $m\in\B{Z}$, $n\in\B{N}$ and $i,j=1,\ldots,n$.
\end{proof}

\section{Cartan Datum}\label{sect:CartanDatum}

In this Section we assume $\B{W}_\sigma(A,\mathbf{b})$ is a generalized Weyl algebra.

\subsection{Twisted differentials}

Let us use $D_i$ for the discrete total derivative of the automorphism $\sigma_i$ for every
$1\leq i\leq n$.  These are linear operators acting on $A$ as
\[ D_i(f) = t_i f t_i^{-1} - f = \sigma_i(f) - f \] for every $f\in A$ and $1\leq i\leq n$.
Observe that $D_i$ is as a right, or equivalently a left, $\sigma_i$-derivation since
\[ D_i(fg) = \sigma_i(fg)-fg = 
  D_i(f)\sigma_i(g) + f D_i(g) \] for every $f,g\in A$ and $1\leq i\leq n$.

\subsection{Cartan datum}\label{subsect:CartanDatum}

Let us fix a generalized Cartan matrix $C=(a_{ij})$.  A generalized Weyl algebra
$\B{W}_\sigma(A,\mathbf{b})$ is said \emph{to be bound by $C$} if the collection
$\mathbf{b}=(b_1,\ldots,b_n)$ satisfies the following conditions
\begin{align}
  D_iD_j(b_j)  & = a_{ji} \quad\text{ for every $i$ and $j$}, \label{eq:CS2}\\
  D_i^{1-a_{ij}}(b_j) & = 0 \quad\text{ for every } i\neq j.\label{eq:CS1}
\end{align}
We will say that a generalized Weyl algebra bound by $C$ \emph{has full rank} when the
elements $D_i(b_i)$ are algebraically independent and the subalgebra they generate is
birationally equivalent to $A$.  We are going to use $\B{W}_\sigma(A,C)$ to denote a
generalized Weyl algebra $\B{W}_\sigma(A,\mathbf{b})$ bound by $C$, and we are going to
refer to it as \emph{the Cartan datum} when the algebra has full rank.

\subsection{Cartan datum and ordinary Weyl algebras}

Given a linear endomorphism $T\colon V\to V$ of a finite dimensional vector space $V$, one
can split $V$ into direct sum of two spaces $V_1\oplus V_2$ where $T$ is identically zero on
$V_2$ and invertible on $V_1$.  Thus one can write a new invertible endomorphism
$Q\colon V\to V$ such that $QT = id_{V_1}\oplus 0_{V_2}$.  We call the matrix $Q$ as the
\emph{quasi-inverse} of $T$.

\begin{proposition}\label{prop:embeddingWeyl}
  Assume $C$ has co-rank $\ell$, and let $\B{W}_\sigma(A,C)$ has full rank.  Then
  $\B{W}_\sigma(A,C)$ is birationally equivalent to the product algebra
  $\B{A}_{n-\ell,n}\otimes k[x_{n-\ell+1},\dots,x_n]$.
\end{proposition}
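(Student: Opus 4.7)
The plan is to invoke Proposition~\ref{prop:birationalWeyl} to reduce the problem to the smash product $A\#\B{T}^n$, replace $A$ by a polynomial ring using the full-rank assumption, then diagonalize the torus action using the rank of $C$, and finally apply the Weyl-algebra trick $p=t$, $q=yt^{-1}$ to extract the copy of $\B{A}_{n-\ell}$.

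Setting $y_i:=D_i(b_i)$, the full rank assumption allows us to replace $A$ birationally by $k[y_1,\ldots,y_n]$. From $D_iD_j(b_j)=a_{ji}$ we obtain $\sigma_i(y_j)=y_j+a_{ji}$, so $\sigma_i$ acts on $k[y_1,\ldots,y_n]$ as the translation by the $i$-th column of $C$. Since $C$ has rank $n-\ell$, its Smith normal form yields unimodular integer matrices $P,R$ with $PCR=\mathrm{diag}(d_1,\ldots,d_{n-\ell},0,\ldots,0)$, where the nonzero $d_i$ are units in $k$ by the characteristic zero hypothesis.

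Introduce new torus generators $\tau_i:=\prod_j t_j^{R_{ji}}$ and new polynomial generators $y'_j:=d_j^{-1}\sum_m P_{jm}y_m$ for $j\leq n-\ell$, with $y'_j:=\sum_m P_{jm}y_m$ for $j>n-\ell$. A direct computation shows $\tau_i(y'_j)=y'_j+\delta_{ij}$ for $i\leq n-\ell$, while $\tau_i$ acts trivially on every $y'_j$ for $i>n-\ell$. Defining $p_i:=\tau_i$ and $q_i:=y'_i\tau_i^{-1}$ for $i\leq n-\ell$ then yields generators satisfying $[p_i,q_j]=\delta_{ij}$, $[p_i,p_j]=[q_i,q_j]=0$, spanning a copy of $\B{A}_{n-\ell}$, while $y'_{n-\ell+1},\ldots,y'_n$ and $\tau_{n-\ell+1}^{\pm 1},\ldots,\tau_n^{\pm 1}$ are central and commute with the Weyl part. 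Since passing from the Laurent generators $\tau_i^{\pm 1}$ to the polynomial generators $\tau_i$ is a birational equivalence, the total algebra is birationally $\B{A}_{n-\ell}\otimes k[y'_{n-\ell+1},\ldots,y'_n,\tau_{n-\ell+1},\ldots,\tau_n]\cong\B{A}_{n-\ell,n}\otimes k[x_{n-\ell+1},\ldots,x_n]$.

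The main obstacle is pairing the two basis changes consistently: a linear change of the polynomial generators of $A$ must be matched with a multiplicative (integer) change of the torus generators so that the smash-product relations are preserved, and the Smith normal form of $C$ delivers matrices compatible with both sides. Once the translation action is in this normal form, the classical creation-annihilation trick $p=t$, $q=yt^{-1}$ essentially concludes the argument.
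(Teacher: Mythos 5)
Your argument is correct and follows the same overall architecture as the paper's proof: pass to $A\#\B{T}^n$ via Proposition~\ref{prop:birationalWeyl}, use the full-rank hypothesis to replace $A$ birationally by the polynomial algebra on the elements $D_i(b_i)$, normalize the translation action $\sigma_i(y_j)=y_j+a_{ji}$ by a change of basis, and then apply the creation--annihilation assignment $x_i\mapsto (\text{normalized generator})\,t_i^{-1}$, $y_i\mapsto t_i$ before localizing. Where you genuinely diverge is in the normalization step. The paper performs a \emph{one-sided} change of basis: it keeps the torus generators $t_i$ fixed and replaces the $h_i=D_i(b_i)$ by $\alpha_i=\sum_u c_{iu}h_u$ using the quasi-inverse $Q$ of $C$, so that $D_i(\alpha_j)=\delta_{ij}$ for $i=j\leq n-\ell$ and $0$ otherwise. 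You instead use the Smith normal form $PCR=\mathrm{diag}(d_1,\ldots,d_{n-\ell},0,\ldots,0)$ over $\B{Z}$, changing basis on \emph{both} sides: a $k$-linear change $y'=Py$ (rescaled by the $d_j^{-1}$, which are units in characteristic zero) on the polynomial generators \emph{and} a monomial change $\tau_i=\prod_j t_j^{R_{ji}}$ of the Laurent generators, legitimate because $R$ is unimodular. This two-sided normalization buys you something real: a relation $QC=\mathrm{diag}(1,\ldots,1,0,\ldots,0)$ with $Q$ invertible forces $\ker C$ to be spanned by the last $\ell$ standard basis vectors, which is not automatic for a degenerate generalized Cartan matrix (e.g.\ the affine matrix $\bigl(\begin{smallmatrix}2&-2\\-2&2\end{smallmatrix}\bigr)$, whose kernel is spanned by $(1,1)$), whereas the Smith normal form always exists and simultaneously keeps the torus change integral. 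Your computation that $\tau_i$ translates $y'$ by $\widetilde{P}CRe_i$, giving $\tau_i(y'_j)=y'_j+\delta_{ij}$ for $i\leq n-\ell$ and trivial action otherwise, is correct, and the bookkeeping of the $2\ell$ leftover central generators matches $\B{A}_{n-\ell,n}\otimes k[x_{n-\ell+1},\ldots,x_n]$ via the identity $\B{A}_{m,n}\cong\B{A}_{\min(m,n)}\otimes k[t_1,\ldots,t_{|m-n|}]$. The one point you share with the paper and leave equally terse is the verification that the resulting map is injective and becomes surjective after localization (surjectivity needs inverting the $p_i=\tau_i$ so that $y'_i=q_ip_i$ and $\tau_i^{-1}$ are recovered); this is routine given PBW-type bases but deserves a sentence.
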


\begin{proof}
  Assume $C=(a_{ij})$ has rank $n-\ell$ and let $Q = (c_{ij})$ be its quasi-inverse.  Then
  we have
  \[ \sum_{u=1}^n c_{ju}a_{ui} =
    \begin{cases}
      1 & \text{ if } 1\leq i\leq n-\ell \text{ and } i=j\\
      0 & \text{ otherwise.}
    \end{cases}
  \] Now, we let $h_i = D_i(b_i)$ and define $\alpha_i$ in $\B{W}_\sigma(A,C)$ given by
  \begin{equation}
    \label{eq:alternate-basis}
    \alpha_i = \sum_{u=1}^n c_{iu} h_u.
  \end{equation}
  These elements satisfy
  \begin{equation}
    \label{eq:differentials}
    D_i (\alpha_j) = \sum_u c_{ju}D_i(h_u) = \sum_u c_{ju}a_{ui} = 
    \begin{cases}
      1 & \text{ if } 1\leq i\leq n-\ell \text{ and } i=j\\
      0 & \text{ otherwise }
    \end{cases}
  \end{equation}
  This means $\alpha_i$ is in the centre when $n-\ell+1\leq i\leq n$. Then we have a
  morphism of algebras $\B{A}_{n-\ell,n}\otimes k[x_{n-\ell+1},\ldots,x_n]\to A\#\B{T}^n$
  given by
  \[ x_i\mapsto \alpha_i t_i^{-1} \qquad y_i \mapsto -t_i. \] One can
  check that it is well-defined since defining relations are
  satisfied.  Since the $Q$ is invertible, the image subalgebra is
  isomorphic to the subalgebra generated by $h_it_i^{-1}$ and
  $t_i$. Moreover, since the algebra has full rank, the elements $h_i$
  birationally generate $A$ as a polynomial algebra. If we localize
  $A\# \B{T}^n$ on the Ore set generated by elements $h_i$ we get the
  desired birational equivalence between
  $\B{A}_{n-\ell,n}\otimes k[x_{n-\ell+1},\ldots,x_n]$ and
  $A\#\B{T}^n$. Now, we use Proposition~\ref{prop:birationalWeyl}.
  The result follows.
\end{proof}

\section{Birational Equivalence for Kac-Moody Algebras}

\subsection{Kac-Moody algebra of a generalized Cartan matrix}

After~\cite[Defn. 3.17]{marquis_introduction_2018}, we define the Kac-Moody Lie algebra
$\G{g}(C)$ associated with the generalized Cartan matrix $C=(a_{ij})$ as the Lie algebra
generated by vectors $E_i, F_i, H_i$ for $1\leq i\leq n$ subject to the relations
\begin{align}
  [H_i,H_j] = & 0, &          [E_i, F_j] = & \delta_{ij} H_j,   \label{eq:UEA1}\\
  [H_i,E_j] = & a_{ij} E_j, &   [H_i, F_j] = & -a_{ij} F_j, \label{eq:UEA2}
\end{align}
for every $1\leq i,j\leq n$, and
\begin{align}
  \adlie(E_i)^{1-a_{ij}}(E_j) = & 0 &   \adlie(F_i)^{1-a_{ij}}(F_j) = & 0 \label{eq:UEA3}
\end{align}
for every $i\neq j$ where $\adlie(x)(y)$ is defined to be $xy-yx$ for every
$x,y\in \G{g}(C)$.

The Kac-Moody algebra of a generalized Cartan matrix $C$ is the universal enveloping algebra
of the Kac-Moody Lie algebra $\G{g}(C)$.  We will use $U(\G{g})$ for the algebra
$U(\G{g}(C))$ dropping $C$ from the notation.

\subsection{Merging subalgebras}\label{sect:smashProduct}

The following subalgebras of $U(\G{g})$ are going to be used in the sequel:
\begin{enumerate}

\item The subalgebra $U^0(\G{g})$ generated by $H_i$ for $1\leq i\leq n$.

\item The subalgebra $U^{\geq 0}(\G{g})$ generated by $E_i$ and $H_i$ for $1\leq i\leq n$.

\item The subalgebra $U^{\leq 0}(\G{g})$ generated by $F_i$ and $H_i$ for $1\leq i\leq n$.
  
\end{enumerate}

Consider the algebras $U^{\geq 0}(\G{g})$ and $U^{\leq 0}(\G{g})$, and define a distributive
law of the form
\[ \omega\colon U^{\leq 0}(\G{g})\otimes_{U^0(\G{g})} U^{\geq 0}(\G{g}) \to U^{\geq
    0}(\G{g})\otimes_{U^0(\G{g})} U^{\leq 0}(\G{g}) \] given by
\[ \omega(F_j E_i) = E_i F_j - \delta_{ij} H_i \] for every $1\leq i,j\leq n$.  Since we have a
Poincare-Birkhoff-Witt basis, one can easily see that the resulting product is exactly
$U(\G{g})$.

\subsection{The canonical Cartan datum of a Kac-Moody algebra}

Let $A$ be the polynomial algebra $k[h_1,\ldots,h_n]$ and let us define
\begin{equation}
  \label{eq:2}
  \sigma_i(h_j) = h_j + a_{ji} 
\end{equation}
for every $1\leq i,j\leq n$.  Notice that with this choice we get $D_i(h_j) = a_{ji}$ for
every $i$ and $j$. Let us recall the elements $\alpha_i$ from~\eqref{eq:alternate-basis} and
we choose
\begin{equation}
  \label{eq:4}
  b_i = \frac{1}{4}h_i(h_i - 2) + \beta_i
\end{equation}
for every $1\leq i\leq n$ where $\beta_i\in k[h_1,\ldots,h_n]$ is an undetermined element in
the subalgebra generated by $\alpha_j$ with $j\neq i$ that we constructed in the proof of
Proposition~\ref{prop:embeddingWeyl}.  We immediately get that $D_i(b_i) = h_i$ for every
$1\leq i\leq n$. Moreover, we see that
\[ D_iD_j(b_j) = D_i(h_j) = a_{ji}. \] Since in the basis $\alpha_i$'s the twisted
differentials $D_i$'s behave like ordinary differential operators, one can solve the system
of differential equations $D^{1-a_{ij}}(b_j)$ for $b_i$'s and determine $\beta_i$'s. This
means the generalized Weyl algebra $\B{W}_\sigma(A,\mathbf{b})$ is bound by $C$ and has
full-rank, i.e. we have a Cartan datum $\B{W}_\sigma(A,C)$.


\subsection{Birational equivalence for Kac-Moody algebras}

\begin{theorem}\label{thm:birational-UEA}
  The algebras $U^{\geq 0}(\G{g})$ and $U^{\leq 0}(\G{g})$
  are birationally equivalent to the product $\B{A}_{n-\ell,n}\otimes k[t_1,\ldots,t_\ell]$
  when the underlying Cartan matrix has co-rank $\ell$.
\end{theorem}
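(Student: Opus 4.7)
The plan is to use the canonical Cartan datum $\B{W}_\sigma(A, C)$ constructed just above as a bridge between the Borel-type subalgebras and the target product $\B{A}_{n-\ell,n} \otimes k[t_1,\ldots,t_\ell]$. I would produce algebra morphisms
\[ \phi^{+}: U^{\geq 0}(\G{g}) \longrightarrow \B{W}_\sigma(A, C), \qquad \phi^{-}: U^{\leq 0}(\G{g}) \longrightarrow \B{W}_\sigma(A, C), \]
show that each becomes an isomorphism after a suitable Ore localization on both sides, and then conclude by composing with Proposition~\ref{prop:birationalWeyl} and Proposition~\ref{prop:embeddingWeyl}.

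The natural candidates are $\phi^{+}(H_i) = h_i$, $\phi^{+}(E_i) = b_i t_i^{-1}$ and $\phi^{-}(H_i) = -h_i$, $\phi^{-}(F_i) = b_i t_i^{-1}$. Well-definedness reduces to checking the defining relations of $U^{\geq 0}(\G{g})$ and $U^{\leq 0}(\G{g})$. The Cartan commutators $[H_i, H_j] = 0$ hold because $A$ is commutative, and the weight relations $[H_i, E_j] = a_{ij} E_j$ and $[H_i, F_j] = -a_{ij} F_j$ both reduce to the one-line computation $[h_i, b_j t_j^{-1}] = a_{ij}\, b_j t_j^{-1}$, which uses $\sigma_j^{-1}(h_i) = h_i - a_{ij}$. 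The Serre identities $\adlie(E_i)^{1-a_{ij}}(E_j) = 0$ and $\adlie(F_i)^{1-a_{ij}}(F_j) = 0$ unfold to the same calculation on $b_i t_i^{-1}$: expanding $\adlie(b_i t_i^{-1})^{1-a_{ij}}(b_j t_j^{-1})$ in the ambient smash product $A \# \B{T}^n$ and tracking the accumulated $\sigma_i$-shifts produces an expression whose coefficient is $D_i^{1-a_{ij}}(b_j)$, which vanishes by condition~\eqref{eq:CS1}. It is precisely for this identity to hold that $b_j = \frac{1}{4} h_j(h_j - 2) + \beta_j$ was engineered in the previous subsection.

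To upgrade $\phi^{\pm}$ to birational equivalences I would invert on the target the Ore set generated by $\{b_i t_i^{-1}\}$ together with the $\{b_i\}$; from the identity $t_i = (b_i t_i^{-1})^{-1} b_i$ this localization recovers all of $A \# \B{T}^n$ (with $A$ localized at the $b_i$'s). On the source side I would invert the $E_i$'s (respectively $F_i$'s), which is legitimate by a standard PBW argument applied to the positive (respectively negative) part of the Kac-Moody algebra, and the extended maps are then isomorphisms onto the corresponding localizations of $A \# \B{T}^n$---surjectivity from the identity for $t_i$ above, and injectivity from comparing PBW-type bases on both sides. Proposition~\ref{prop:birationalWeyl} identifies $A \# \B{T}^n$ birationally with $\B{W}_\sigma(A, C)$, and Proposition~\ref{prop:embeddingWeyl} identifies this further with $\B{A}_{n-\ell, n} \otimes k[t_1, \ldots, t_\ell]$; chaining the equivalences completes the proof.

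The hardest step will be the Serre-relation calculation. One must carefully expand iterated brackets of $b_i t_i^{-1}$ factors in the ambient smash product, juggle the non-commutative interactions between the $b_i$'s and the $t_i^{\pm 1}$'s, and reorganize the sum until the coefficient visibly becomes $D_i^{1-a_{ij}}(b_j)$, at which point vanishing follows from~\eqref{eq:CS1}. A secondary subtlety is verifying the PBW/Ore property for $\{E_i\}$ and $\{F_i\}$ in the Kac-Moody setting, which is classical for finite-dimensional $\G{g}$ and should extend via the triangular decomposition of $U(\G{g})$ and the standard Ore-localization formalism.
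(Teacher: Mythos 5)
Your proposal takes essentially the same route as the paper: it maps $H_i \mapsto h_i$ and $E_i \mapsto b_i t_i^{-1}$ into the canonical Cartan datum $\B{W}_\sigma(k[h_1,\ldots,h_n],C)$, upgrades this to a birational equivalence by localizing at the $H_i$'s, $E_i$'s and their images, and then composes with Proposition~\ref{prop:embeddingWeyl}. The only difference is that you spell out the verification of the weight and Serre relations and the Ore-localization details, which the paper's proof leaves implicit.
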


\begin{proof}
  We will give the proof for $U^{\geq 0}(\G{g})$. The proof for $U^{\leq 0}(\G{g})$ is
  similar.  

  We have a well-defined morphism of algebras of the form
  $U^{\geq 0}(\G{g})\to \B{S}_\sigma(k[h_1,\ldots,h_n],C)$ given by
  \begin{equation}
    \label{eq:13}
    H_i \mapsto h_i \qquad E_i \mapsto \left(\frac{1}{4}h_i(h_i-2) + \beta_i\right) t_i^{-1}
  \end{equation}
  for $1\leq i\leq n$ which is a birational equivalence when we invert $H_i$ and $h_i$'s and
  $E_i$ and $e_i$'s.  On the other hand Proposition~\ref{prop:embeddingWeyl} tells us that
  $\B{W}_\sigma(k[h_1,\ldots,h_n] ,C)$ is birationally equivalent to
  $\B{A}_{n-\ell,n}\otimes k[x_{n-\ell+1},\ldots,x_n]$ since
  $\B{W}_\sigma(k[h_1,\ldots,h_n] ,C)$ has full rank.  The result follows.
\end{proof}

\begin{corollary}\label{cor:KacMoody}
  Let $C$ be a generalized Cartan matrix of co-rank $\ell$ and let $\B{W}_\sigma(A,C)$ be a
  Cartan datum.  The Kac-Moody algebra associated with $C$ is birationally equivalent to the
  smash biproduct of two copies of the algebra $\B{W}_\sigma(A,C)$, or equivalently smash
  biproduct of two copies of the algebra $\B{A}_{n-\ell,n}\otimes k[t_1,\ldots,t_\ell]$.
\end{corollary}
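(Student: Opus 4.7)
The plan is to lift the birational equivalences of Theorem~\ref{thm:birational-UEA} to the full algebra $U(\G{g})$ through the smash biproduct decomposition described in Subsection~\ref{sect:smashProduct}. Recall from there that the distributive law $\omega(F_jE_i) = E_iF_j - \delta_{ij}H_i$ together with the PBW theorem realizes $U(\G{g})$ as a smash biproduct of $U^{\geq 0}(\G{g})$ and $U^{\leq 0}(\G{g})$ amalgamated over $U^0(\G{g})$. Combined with Theorem~\ref{thm:birational-UEA} and Proposition~\ref{prop:embeddingWeyl}, this immediately suggests both asserted descriptions of $U(\G{g})$.

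First I would fix the algebra morphisms $\varphi^{\geq 0}\colon U^{\geq 0}(\G{g})\to \B{W}_\sigma(A,C)$ and $\varphi^{\leq 0}\colon U^{\leq 0}(\G{g})\to \B{W}_\sigma(A,C)$ produced in the proof of Theorem~\ref{thm:birational-UEA}, together with the Ore sets $S^+$ and $S^-$ whose inversion turns each $\varphi^{\geq 0}$, $\varphi^{\leq 0}$ into an isomorphism. Because both morphisms agree on the Cartan part via $H_i\mapsto h_i$, and because each $S^\pm$ is generated (over $U^0(\G{g})$) by the $H_i$'s together with images of the $E_i$'s or $F_i$'s, they embed into $U(\G{g})$ and one can form the product $S = S^+ \cdot S^-$. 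The key task is to show that $S$ is a two-sided Ore set in $U(\G{g})$ and that the localization $U(\G{g})_S$ decomposes as a smash biproduct of the two localized halves $U^{\geq 0}(\G{g})_{S^+}$ and $U^{\leq 0}(\G{g})_{S^-}$.

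The main obstacle lies in checking that the distributive law $\omega$ passes to these localizations. Concretely, one must move inverses of elements of $S^-$ past elements of $U^{\geq 0}(\G{g})$ via repeated application of $\omega$, producing finite sums with denominators still drawn from $S^+$. Since $\omega(F_jE_i) = E_iF_j - \delta_{ij}H_i$ is linear and every element of $S^\pm$ is built out of $H_i$ together with $E_i$ or $F_i$, iterated commutators $[E_i,s^-]$ remain within $U^{\geq 0}(\G{g})\cdot U^0(\G{g})$, so an induction on the PBW-length of $s^-$ should yield the required Ore condition on both sides.

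Once this compatibility is in place, the rest is formal. The localization $U(\G{g})_S$ becomes isomorphic to $\B{W}_\sigma(A,C)\#_{\tilde\omega}\B{W}_\sigma(A,C)$ for the induced distributive law $\tilde\omega$ obtained by transporting $\omega$ through $\varphi^{\geq 0}\otimes\varphi^{\leq 0}$; this establishes the first form of the statement. Substituting the further birational equivalence of Proposition~\ref{prop:embeddingWeyl} on each tensor factor, after enlarging $S$ by the elements $h_i$ needed there, replaces each copy of $\B{W}_\sigma(A,C)$ by $\B{A}_{n-\ell,n}\otimes k[t_1,\ldots,t_\ell]$ and yields the equivalent second form. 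Transitivity of birational equivalence then delivers the corollary.
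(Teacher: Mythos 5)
Your proposal follows essentially the same route as the paper: the paper's proof is a one-line appeal to Proposition~\ref{prop:embeddingWeyl} and Theorem~\ref{thm:birational-UEA}, implicitly using the smash biproduct decomposition of $U(\G{g})$ from Subsection~\ref{sect:smashProduct} exactly as you do. You in fact supply more detail than the paper does, in particular the verification that the Ore sets from the two Borel halves combine compatibly with the distributive law $\omega$ --- a point the paper leaves entirely implicit.
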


\begin{proof}
  Follows Proposition~\ref{prop:embeddingWeyl} and Theorem~\ref{thm:birational-UEA}.
\end{proof}

\section{Birational Equivalence for Quantized Kac-Moody algebras}\label{sect:quantizedKacMoody}

As before, let us fix an algebra $A$, a set of commuting automorphisms $\sigma_i\in Aut(A)$ and
elements $b_i\in A$ for $1\leq i\leq n$.  This time we assume that the Cartan matrix is
symmetrizable, i.e. there are scalars $d_i\in k^\times$ such that $d_i a_{ij} = d_j a_{ij}$ for
$1\leq i,j\leq n$.

\subsection{Quantized Cartan datum}

A generalized Weyl algebra $\B{W}_\sigma(A,\mathbf{b})$ is called the \emph{quantum
  generalized Weyl algebra} bound by a generalized Cartan matrix $C = (a_{ij})$ if the
elements $b_1,\ldots,b_n$ satisfy the following conditions
\begin{align}
  \sigma_i(b_j) = & q^{d_i a_{ij}} b_j\ \text{ for every $i$ and $j$,}   \label{eq:qCS2}\\
  D_i^{[1-a_{ij}]}(b_j) = & 0\ \text{ for every } i\neq j \label{eq:qCS1}
\end{align}
where we define
\begin{equation}
  \label{eq:6}
  D_i^{[m]} = \prod_{\ell=0}^{m-1}(\sigma_i - q^{2\ell d_i})
\end{equation}
for every $m\geq 0$ and $1\leq i\leq n$.  As before, we say $\B{W}_\sigma^q(A,C)$ is of full
rank if the subalgebra generated by $b_i$'s are algebraically independent and birationally
generate $A$. We are going to refer such an algebra as \emph{the quantum Cartan datum}, and
denote it by $\B{W}^q_\sigma(A,C)$.

\subsection{Quantized Cartan datum and quantized Weyl  algebras}\label{sect:quantized-Weyl-vs-Borel}

Let $Q = (c_{ij})$ be the quasi-inverse of the Cartan matrix, and let $g_i$ be the smallest
positive integer such that $c_{ij}g_j$ is an integer for every $1\leq i,j\leq n$.  We define
the quantum Weyl algebra $\B{A}^q_{m,n}$ with the presentation
\begin{equation}
  \label{eq:11}
  \frac{k\{x_1,\ldots,x_m,y_1,\ldots,y_n\}}{\langle [x_i, x_{i'}], [y_j, y_{j'}], y_j x_i - q^{g_i\delta_{ij}}x_i y_j\mid  i,i'=1,\ldots,m,\ j,j'=1,\ldots,n\rangle}.
\end{equation}

\begin{proposition}\label{prop:rational-Weyl}
  Assume $C$ has co-rank $\ell$. Then the quantized Cartan datum $\B{W}_\sigma^q(A,C)$ is
  birationally equivalent to $\B{A}^q_{n-\ell,n}\otimes k[t_1,\ldots,t_\ell]$.
\end{proposition}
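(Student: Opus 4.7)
The plan is to follow the proof of Proposition~\ref{prop:embeddingWeyl} step by step, only replacing the additive change of variables with a multiplicative one: in the quantum setting $\sigma_i$ acts on $b_j$ by the scalar $q^{d_i a_{ij}}$ rather than by an additive translation, so the analogue of a linear combination is a monomial. Concretely, I would let $Q=(c_{ij})$ be the quasi-inverse of $C$ and define, in a suitable localization of $\B{W}^q_\sigma(A,C)$, the elements
\[
   \alpha_j \;=\; \prod_{u=1}^{n} b_u^{\,g_j c_{ju}/d_u},
\]
where the integer $g_j$ (the same one appearing in the presentation~\eqref{eq:11}) is chosen so that every exponent $g_j c_{ju}/d_u$ lies in $\B{Z}$.

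Using $\sigma_i(b_u)=q^{d_i a_{iu}}b_u$ together with the symmetrization identity $d_i a_{iu}=d_u a_{ui}$ (so $(d_i/d_u)a_{iu}=a_{ui}$), a direct multiplicative computation collapses to
\[
   \sigma_i(\alpha_j) \;=\; q^{\,g_j\sum_u c_{ju}a_{ui}}\,\alpha_j,
\]
and the quasi-inverse identity $\sum_u c_{ju}a_{ui}=\delta_{ij}$ for $i\leq n-\ell$ (and $0$ otherwise), established in the proof of Proposition~\ref{prop:embeddingWeyl}, turns this into $\sigma_i(\alpha_j)=q^{g_i\delta_{ij}}\alpha_j$ when $i\leq n-\ell$ and $\sigma_i(\alpha_j)=\alpha_j$ otherwise. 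In particular each $\alpha_i t_i^{-1}$ with $i>n-\ell$ is central inside $A\#\B{T}^n$, and the remaining ones obey the $q$-commutation relation of $\B{A}^q_{n-\ell,n}$ against $-t_j$. I would then define
\[
   \B{A}^q_{n-\ell,n}\otimes k[z_{n-\ell+1},\ldots,z_n] \;\longrightarrow\; A\#\B{T}^n
\]
by $x_i\mapsto \alpha_i t_i^{-1}$, $y_j\mapsto -t_j$, and $z_i\mapsto \alpha_i t_i^{-1}$ for $i>n-\ell$. Commutativity of the $x$'s and of the $y$'s among themselves follows from commutativity of $A$ and $\B{T}^n$, and the twisted relation $y_j x_i=q^{g_i\delta_{ij}}x_i y_j$ is just the rewriting of the eigenvalue formula above.

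For the birational step, invertibility of the relevant block of $Q$ means that each $b_u$ can be recovered as a fractional monomial in the $\alpha_j$'s, so inverting the $b_u$'s (equivalently, the $\alpha_j$'s) and the $t_i$'s on both sides upgrades the morphism into an isomorphism onto $A\#\B{T}^n$. Proposition~\ref{prop:birationalWeyl} — whose proof uses only the formal structure of a smash product and applies verbatim to the quantum case — then identifies $\B{W}^q_\sigma(A,C)$ birationally with $A\#\B{T}^n$, completing the chain. The main obstacle is the bookkeeping of exponents: one has to pin down $g_j$ so that $g_j c_{ju}/d_u\in\B{Z}$ for every $u$, and then combine $d_i a_{iu}=d_u a_{ui}$ with $\sum_u c_{ju}a_{ui}=\delta_{ij}$ in exactly the right order for the $q$-exponent in $\sigma_i(\alpha_j)$ to collapse to a Kronecker delta; once this bookkeeping is in hand, the rest of the argument is a mechanical multiplicative translation of the additive proof of Proposition~\ref{prop:embeddingWeyl}.
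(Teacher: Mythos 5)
Your proposal is correct and follows essentially the same route as the paper's proof: both form monomials in the $b_u$'s with exponents taken from the (integrally rescaled) quasi-inverse $Q$, verify that $\sigma_i$ acts on them by $q^{g_i\delta_{ij}}$ via the quasi-inverse identity, send the quantum Weyl generators to $\omega_i t_i^{-1}$ and $t_i$, and reduce the birational step to the $b_u$'s generating $A$ up to localization. Your bookkeeping of the symmetrizers $d_u$ in the exponents is in fact slightly more careful than the paper's, which suppresses the factor $d_j$ in the computation of $\sigma_j(\omega_i)$, but this does not change the argument.
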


\begin{proof}
  Let $S$ be the Ore set in $A\# \B{T}^n$ generated by the elements $b_i$ for
  $1\leq i\leq n$.  For each $1\leq i\leq n$ we consider
  \begin{equation}
    \label{eq:10}
    \omega_i := \prod_{u=1}^n b_u^{c_{ui}g_i}.
  \end{equation}
  in $(A\# \B{T}^n)_S$.  Now, observe that
  \[ \sigma_j(\omega_i) = \prod_u \sigma_j (b_u^{c_{ui}g_i}) = \prod_u
    q^{a_{ju}c_{ui}g_i}b_u^{c_{ui}g_i} =
    \begin{cases}
      q^{g_i}\omega_i & \text{ if } 1\leq i=j\leq n-\ell\\
      \omega_i & \text{ otherwise.}
    \end{cases}
  \] The elements $\omega_i$ need not be elements in $A$ since we may have negative exponents.
  This problem is solved by using our suitable localization.  Now, one can define an algebra
  map after such a localization $\B{A}_{n-\ell,n}^q\to (A\# \B{T}^n)_S$ by
  \begin{equation}
    \label{eq:quantum-weyl-map}
    x_i \mapsto \omega_it_i^{-1} \qquad y_i\mapsto t_i 
  \end{equation}
  for $1\leq i\leq n$.  One can check that
  the defining relations are satisfied. Now, determining whether this embedding is a
  birational equivalence boils down to checking if the embedding of the subalgebra of $A$
  generated by $\omega_1,\ldots,\omega_n$ is a birational equivalence. The result follows.
\end{proof}

\subsection{Quantized Kac-Moody algebra}\label{sect:quantumKacMoody}

We fix a $q\in k^{\times}$ and let $d_1,\ldots,d_n$ be the set of positive integers such
that $d_i a_{ij} = d_j a_{ji}$ for every $1\leq i,j\leq n$.  We define the quantum Kac-Moody
algebra $U_q(\G{g})$ associated with the generalized Cartan matrix as the algebra generated
by non-commuting indeterminates $E_i, F_i, K_i^{\pm 1}$ for $1\leq i\leq n$ subject to the
following relations
\begin{align}
  K_iK_j = & K_jK_i, &
 [E_i, F_j] = & \delta_{ij} \frac{K_i - K_i^{-1}}{q^{d_i}-q^{-d_i}},   \label{eq:qUEA1}\\
  E_j K_i = & q^{-d_i a_{ij}} K_i E_j, &   F_j K_i = & q^{d_i a_{ij}} K_i F_j, \label{eq:qUEA2}
\end{align}
for every $1\leq i,j\leq n$ and
\begin{align}
  \adlie_q(E_i)^{1-a_{ij}}(E_j) = & 0 &   \adlie_q(F_i)^{1-a_{ij}}(F_j) = & 0 \label{eq:qUEA3}
\end{align}
where $\adlie_q(E_i)(E_j)$ and $\adlie_q(F_i)(F_j)$ are defined to be
\begin{equation}
  \label{eq:17}
  \adlie_q(E_i)(E_j) = E_i E_j - q^{d_i a_{ij}} E_j E_i\quad\text{ and }\quad
  \adlie_q(F_i)(F_j) = F_i F_j - q^{-d_i a_{ij}} F_j F_i  
\end{equation}
for every $i\neq j$.  

\subsection{Localization of a quantized Kac-Moody algebra}

If we localize $U_q(\G{g})$ with respect to $E_i$ and $F_i$ for $1\leq i\leq n$, the
defining relation~\eqref{eq:qUEA2} manifests itself as
\begin{equation}
  \label{eq:localized-qCS2}
  \adgr(K_j^{-1}E_j)(K_i) = q^{-d_i a_{ij}}K_i\qquad \adgr(K_j^{-1}F_j)(K_i) = q^{d_i a_{ij}}K_i
\end{equation}
and the Chevalley-Serre relations become
\begin{equation}
  \label{eq:localized-qCS1}
  \prod_{\ell=0}^{-a_{ij}}(\adgr(K_i^{-1}E_i)-q^{2\ell d_i})(E_j) = 0\qquad
  \prod_{\ell=0}^{-a_{ij}}(\adgr(K_i^{-1}F_i)-q^{2\ell d_i})(F_j) = 0
\end{equation}
for every $1\leq i,j\leq n$.

\subsection{Merging quantized Borel subalgebras}\label{sect:quantum-merging}

As in the case of Kac-Moody algebras, we have following subalgebras.
\begin{enumerate}
\item The Cartan subalgebra $U_q^0(\G{g})$ generated by $K_i^{\pm 1}$ for $1\leq i\leq n$.
\item The Borel subalgebra $U_q^{\geq 0}(\G{g})$ generated by $E_i$ and $K_i^{\pm 1}$ for
  $1\leq i\leq n$.
\item The Borel subalgebra $U_q^{\leq 0}(\G{g})$ generated by $F_i$ and $K_i^{\pm 1}$ for
  $1\leq i\leq n$.
\end{enumerate}

Again as before, one can merge the Borel subalgebras $U_q^{\geq 0}(\G{g})$ and
$U_q^{\leq 0}(\G{g})$ via a suitable distributive law:
\[ \omega_q\colon U_q^{\leq 0}(\G{g})\otimes_{U_q^0(\G{g})} U_q^{\geq 0}(\G{g}) \to
  U_q^{\geq 0}(\G{g})\otimes_{U_q^0(\G{g})} U_q^{\leq 0}(\G{g}) \] given by
\[ \omega(F_i E_j) = E_j F_i - \delta_{uv}\frac{K_i-K_i^{-1}}{q^{d_i}-q^{-d_i}} \] for every
$1\leq i,j\leq n$.  Since we have a Poincare-Birkhoff-Witt basis, one can easily see that
the resulting product is exactly $U_q(\G{g})$.

\subsection{Birational equivalences for quantum Kac-Moody algebras}\label{sect:birational-qCS}

\begin{theorem}\label{thm:quantumBorel}
  The quantum Borel algebras $U_q^{\geq 0}(\G{g})$ and $U_q^{\leq 0}(\G{g})$ are birationally
  equivalent to the product algebra $\B{A}^q_{n-\ell,n}\otimes k[t_1,\ldots,t_\ell]$ when the
  underlying Cartan matrix has co-rank~$\ell$.
\end{theorem}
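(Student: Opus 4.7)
The plan is to mirror the argument of Theorem~\ref{thm:birational-UEA} in the quantum setting: construct a full-rank quantum Cartan datum that receives a natural map from $U_q^{\geq 0}(\G{g})$, upgrade that map to an isomorphism after localization, and then chain with Proposition~\ref{prop:rational-Weyl}.

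First, take $A = k[K_1^{\pm 1},\ldots,K_n^{\pm 1}]$ with commuting automorphisms $\sigma_i$ defined by $\sigma_i(K_j) = q^{d_i a_{ij}} K_j$; a direct computation using $t_j K_i = \sigma_j(K_i) t_j$ in $A \# \B{T}^n$ shows that this scaling is exactly what is needed to recover the relation $K_i E_j K_i^{-1} = q^{d_i a_{ij}} E_j$ under the intended identification $E_i \mapsto b_i t_i^{-1}$. Guided by the classical choice $b_i = \tfrac{1}{4}h_i(h_i-2)+\beta_i$ from~\eqref{eq:4}, one takes $b_i$ whose principal part is a $\sigma_i$-eigenvector of eigenvalue $q^{2 d_i} = q^{d_i a_{ii}}$, plus a correction term $\beta_i$ in the Laurent polynomial subalgebra generated by the $K_j$ with $j\neq i$. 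The principal part makes~\eqref{eq:qCS2} automatic, and the undetermined $\beta_i$ is chosen so that~\eqref{eq:qCS1} holds.

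Second, define a morphism $\Phi\colon U_q^{\geq 0}(\G{g}) \to \B{W}^q_\sigma(A,C)\subseteq A \# \B{T}^n$ by $K_i^{\pm 1}\mapsto K_i^{\pm 1}$ and $E_i\mapsto b_i t_i^{-1}$. The relations~\eqref{eq:qUEA2} hold by construction, and the quantum Serre relations~\eqref{eq:qUEA3} translate into the quantum Chevalley-Serre conditions~\eqref{eq:qCS1} on the $b_j$'s, which are satisfied by our choice of $\beta_i$. Localizing both sides on the multiplicative set generated by the $E_i$'s on the source (equivalently by the $b_i$'s on the target), the map $\Phi$ becomes an isomorphism of localizations, yielding the birational equivalence between $U_q^{\geq 0}(\G{g})$ and $\B{W}^q_\sigma(A,C)$. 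The argument for $U_q^{\leq 0}(\G{g})$ is symmetric, using the dual sign convention indicated by~\eqref{eq:localized-qCS2}. Chaining with Proposition~\ref{prop:rational-Weyl} concludes the proof by providing the birational equivalence between $\B{W}^q_\sigma(A,C)$ and $\B{A}^q_{n-\ell,n}\otimes k[t_1,\ldots,t_\ell]$.

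The main obstacle is the existence of the corrections $\beta_i$ satisfying~\eqref{eq:qCS1}. Classically, this was handled by passing to the $\alpha$-basis on which the $D_i$'s behave as ordinary derivations, reducing~\eqref{eq:CS1} to an elementary ODE problem. The quantum analogue is to decompose $A$ into the common eigenspaces of the commuting automorphisms $\sigma_i$, indexed by exponent multi-indices of the $K_j$'s; on such an eigenspace $\sigma_i$ acts by a monomial in $q$, so the operator $D_i^{[1-a_{ij}]} = \prod_{\ell=0}^{-a_{ij}}(\sigma_i - q^{2\ell d_i})$ kills precisely those eigenspaces whose $\sigma_i$-eigenvalue lies in the finite set $\{1, q^{2 d_i}, \ldots, q^{-2 a_{ij} d_i}\}$. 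The constraint~\eqref{eq:qCS1} therefore amounts to requiring $b_j$ to be supported on a prescribed finite family of eigenspaces, and the classical inductive construction of $\beta_i$ transports verbatim, provided $q$ avoids the finite set of roots of unity at which the eigenvalues $q^{2\ell d_i}$ collapse.
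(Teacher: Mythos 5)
Your overall skeleton matches the paper's: realize $U_q^{\geq 0}(\G{g})$ inside $A\#\B{T}^n$ with $A$ the Laurent algebra on the $K_i$, send $E_i\mapsto b_i t_i^{-1}$, localize, and chain with Proposition~\ref{prop:rational-Weyl}. The gap is in your construction of the $b_i$. In the quantum setting, condition~\eqref{eq:qCS2} is not the second-order condition~\eqref{eq:CS2} of the classical case: it demands that $b_j$ be a \emph{simultaneous} eigenvector of all the $\sigma_i$ with prescribed eigenvalues $q^{d_i a_{ij}}$. On the torus a monomial $K^{\mathbf{m}}$ satisfies this iff $C\mathbf{m}=\pm Ce_j$ (sign depending on your convention for $\sigma_i$), so modulo the kernel of $C$ the choice is forced to be $b_j=K_j^{\pm 1}$; in particular your correction term $\beta_j$, which you require to lie in the subalgebra generated by the $K_u$ with $u\neq j$ and which must itself satisfy the same eigenvector condition (the condition is linear in $b_j$), is forced to vanish whenever $C$ is invertible. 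There is therefore no freedom left to ``choose $\beta_i$ so that~\eqref{eq:qCS1} holds,'' and your own eigenspace analysis, pushed one step further, exposes the problem you do not resolve: $b_j$ lies in the single $\sigma_i$-eigenspace of eigenvalue $q^{d_i a_{ij}}$, while $D_i^{[1-a_{ij}]}$ annihilates only the eigenvalues $q^{2\ell d_i}$ with $0\leq\ell\leq -a_{ij}$, and $d_i a_{ij}=2\ell d_i$ has no solution in that range when $a_{ij}<0$. The classical ``solve a differential equation for $\beta_i$'' step simply has no quantum analogue here and should be dropped.

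The paper's proof takes $b_i=K_i^{-1}$ outright (with $\sigma_i(K_j)=q^{-d_i a_{ij}}K_j$) and checks the Cartan-datum conditions against the \emph{localized} forms~\eqref{eq:localized-qCS2} and~\eqref{eq:localized-qCS1} of the defining relations, in which the quantum Serre operator is conjugation by $K_i^{-1}E_i$ acting on $E_j=b_j t_j^{-1}$ rather than $\sigma_i$ acting on $b_j$ alone; the extra twist contributed by $t_j^{-1}$ is what makes the Serre condition consistent. Full rank is then immediate because the $K_i^{\pm 1}$ generate $A$. To repair your argument you should adopt that explicit choice (or that reading of~\eqref{eq:qCS1}) and verify the quantum Serre relations~\eqref{eq:qUEA3} directly for $E_i\mapsto K_i^{-1}t_i^{-1}$ in the localized smash product, rather than attempting to transport the classical construction of the $\beta_i$.
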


\begin{proof}
  Let $A$ be the torus $\B{T}^n$ generated by $K_1,\ldots,K_n$ and let
  \begin{equation}
    \label{eq:18}
    \sigma_i(K_j) = q^{-d_i a_{ij}} K_j
  \end{equation}
  for every $1\leq i,j\leq n$.  If we let $b_i=K_i^{-1}$ then the defining
  relation~\eqref{eq:qCS2} is satisfied in its localized form~\eqref{eq:localized-qCS2} for
  every $1\leq i,j\leq n$.  The defining equation~\eqref{eq:qCS1} is satisfied because we
  have~\eqref{eq:localized-qCS1}.  This means the algebra isomorphism
  $U_q^{\geq 0}(\G{g})\to \B{W}_\sigma^q(\B{T}^n,C)$ given by
  \[ K_i \mapsto K_i,\qquad E_i \mapsto K_i^{-1} t_i^{-1} \] for every $1\leq i\leq n$ is
  well-defined after a suitable localization.  One can also see that
  $\B{W}_\sigma^q(\B{T}^n,C)$ is full-rank since we use $b_i = K_i$ for $1\leq i\leq n$.  So,
  we obtain a birational equivalence of the form
  $U^{\geq 0}_q(\G{g})\to \B{W}_\sigma^q(\B{T}^n,C)$.  We also have another birational
  equivalence of the form
  $\B{A}^q_{n-\ell,n}\otimes k[x_{n-\ell+1},\ldots,x_n]\to \B{W}_\sigma^q(\B{T}^n,C)$ by
  Proposition~\ref{prop:rational-Weyl}.  The result follows.
\end{proof}

\begin{corollary}\label{cor:quantumKacMoody}
  The quantized Kac-Moody algebra $U_q(\G{g})$ is birationally equivalent to a smash biproduct
  of two copies of the product algebra $\B{A}^q_{n-\ell,n}\otimes k[t_1,\ldots,t_\ell]$ when
  the underlying Cartan matrix has co-rank $\ell$.
\end{corollary}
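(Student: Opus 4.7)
The plan is to mirror the derivation of Corollary~\ref{cor:KacMoody} from Theorem~\ref{thm:birational-UEA}, using Theorem~\ref{thm:quantumBorel} in place of its classical counterpart. The starting point is the observation in Section~\ref{sect:quantum-merging} that the distributive law $\omega_q$, together with the quantum Poincaré-Birkhoff-Witt theorem, identifies $U_q(\G{g})$ as the smash biproduct of the two quantized Borel halves $U_q^{\geq 0}(\G{g})$ and $U_q^{\leq 0}(\G{g})$ amalgamated over the Cartan $U_q^0(\G{g})$. So structurally $U_q(\G{g})$ is already presented in the shape required by the conclusion, with the two quantum Borels in the role of the two factors.

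Given this decomposition, I would apply Theorem~\ref{thm:quantumBorel} to each Borel factor separately, obtaining a birational equivalence of each with $\B{A}^q_{n-\ell,n}\otimes k[t_1,\ldots,t_\ell]$. The Ore sets produced in the proof of that theorem are built from the torus generators $K_i^{\pm 1}$ together with the $E_i$ (respectively the $F_i$), all of which live inside the appropriate Borel half. Merging the two Ore sets into a single multiplicative system $S = S^+\cdot S^-$ inside $U_q(\G{g})$ then localizes the smash biproduct, and I would argue that this localization splits as the smash biproduct of the two individually localized Borel halves over the localized Cartan.

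The main obstacle is verifying that the two Ore sets are mutually compatible with $\omega_q$, i.e. that commuting elements of $S^-$ past $S^+$ through the distributive law does not leave the combined localization. The relation $\omega_q(F_j E_i) = E_i F_j - \delta_{ij}(K_i - K_i^{-1})/(q^{d_i}-q^{-d_i})$ shows that every cross-term picks up at worst an element of the Cartan localization, which already belongs to both $S^+$ and $S^-$; iterating, any element of $S^+$ remains Ore inside $U_q^{\leq 0}(\G{g})\otimes_{U_q^0(\G{g})} U_q^{\geq 0}(\G{g})$, and symmetrically for $S^-$. Once this compatibility is in place, the two birational equivalences from Theorem~\ref{thm:quantumBorel} assemble, through the distributive law, into a single birational equivalence between $U_q(\G{g})$ and the smash biproduct of two copies of $\B{A}^q_{n-\ell,n}\otimes k[t_1,\ldots,t_\ell]$, establishing the corollary.
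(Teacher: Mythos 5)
Your proposal follows essentially the same route as the paper, whose proof is a one-line citation of Proposition~\ref{prop:rational-Weyl}, Theorem~\ref{thm:quantumBorel}, and the smash-biproduct decomposition of Section~\ref{sect:quantum-merging}. In fact you supply more detail than the paper does, since the compatibility of the two Ore sets with the distributive law $\omega_q$ --- which you address explicitly --- is left entirely implicit in the paper's argument.
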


\begin{proof}
  Follows from Proposition~\ref{prop:rational-Weyl}, Theorem~\ref{thm:quantumBorel}, and
  Section~\ref{sect:quantum-merging}.
\end{proof}

\bibliographystyle{siam}
\bibliography{references}{}

\end{document}